\documentclass{birkjour}

\usepackage{amsmath,amssymb,amsfonts,amsthm,amscd}
\usepackage{mathrsfs,latexsym,url,graphicx,setspace,color}
\usepackage{hyperref}

\newtheorem{thm}{Theorem}[section]

 \theoremstyle{definition}
 
 \theoremstyle{remark}

 \numberwithin{equation}{section}


\newcommand{\abs}[1]{\left\lvert#1\right\rvert}
\newcommand{\norm}[1]{\lvert\lvert#1\rvert\rvert}

\newcommand{\disk}{\mathbb{D}}

\newcommand{\SOm}{\mathbf{S}_{\Omega}}

\begin{document}

\title[Szeg\"o Projection on Bounded Pseudoconvex Domains]{Irregularity of the Szeg\"o Projection on Bounded Pseudoconvex Domains in $\mathbb{C}^2$}

\author[S. Munasinghe]{Samangi Munasinghe}
\address{Department of Mathematics, Western Kentucky University, Bowling Green, Kentucky, 42101}
\email{samangi.munasinghe@wku.edu}

\author[Y. E. Zeytuncu]{Yunus E. Zeytuncu}
\address{Department of Mathematics and Statistics, University of Michigan-Dearborn, Dearborn, MI 48128}
\email{zeytuncu@umich.edu}

\subjclass{Primary  32A25. Secondary  32A36, 47B34.}
\keywords{Szeg\"o projection, Forelli-Rudin inflation principle, weighted Hardy spaces.}
\date{}

\begin{abstract}
We construct bounded pseudoconvex domains in $\mathbb{C}^2$ for which the Szeg\"o projection operators are unbounded on $L^p$ spaces of the boundary for all $p\not =2$.
\end{abstract}


\maketitle 

\section{Introduction}
Let $\Omega$ be a bounded domain with piecewise smooth boundary in $\mathbb{C}^n$. Let $d\sigma_E$ denote the Euclidean surface measure on the boundary $b\Omega$, the topological boundary of $\Omega$,  and let $L^p(b\Omega)$ denote the standard Lebesgue space with respect to $d\sigma_E$ on $b\Omega$. For $1\leq p <\infty$, the Hardy space $H^p(b\Omega)$ is the closure of the set of holomorphic functions that are continuous up to the boundary in $L^p(b\Omega)$. When $p = 2$, there exists the orthogonal projection operator, so called the Szeg\"o projection operator,

$$\mathbf{S}_{\Omega} : L^2(b\Omega, d\sigma_E) \to H^2(b\Omega)$$
where
$$\mathbf{S}_{\Omega}f(z_1,z_2)=\int_{b\Omega}\mathbb{S}_{\Omega}[(z_1,z_2),(t_1,t_2)]f(t_1,t_2)d\sigma_E(t_1,t_2).$$
The integral kernel, $\mathbb{S}_{\Omega} [(z_1, z_2), (t_1, t_2)]$, is called the Szeg\"o kernel for
$H^2(b\Omega)$. For details of the setup, we refer the reader to \cite{SteinBigBook} for domains with smooth enough boundary and to \cite{Knopf} for piecewise smooth boundary like the bidisc.

It is a natural question to investigate the behavior of the integral operator $\SOm$ on $L^p(b\Omega)$ for $p\not= 2$. It is known that under certain geometric conditions on the domain $\Omega$ the projection operator $\SOm$ is bounded on $L^p(b\Omega)$ for all $p \in (1,\infty)$, see for example \cite{PhongStein, McNealSzego, Peloso, Charp06, LanzaniStein13}. On the other hand, $\SOm$ does not have to be bounded on all $L^p(b\Omega)$ spaces. However, explicit examples of bounded pseudoconvex domains where $\mathbf{S}_{\Omega}$ is unbounded on $L^p(b\Omega)$ for all $p\not =2$ is a new phenomena.  
On the other hand, in the complex plane $\mathbb{C}$, there are domains with piecewise smooth boundary whose Szeg\"o projections are bounded for some (not for all $p\not= 2$) of $L^p(b\Omega)$ (cf. \cite{Bekolle86, LanzaniStein04}). In the Hardy space setting corresponding to the Shilov boundary, it has been known for a long time that the associated Szeg\"o projection is unbounded for all $p\not= 2$ on irreducible bounded symmetric domains whose rank is greater than 1 (cf. \cite{BekolleSzego}).

In this note, we present examples of domains for which the Szeg\"o projection operators are $L^p$-irregular. More specifically, we construct bounded pseudoconvex domains in $\mathbb{C}^2$ (can be easily generalized to higher dimensions too) for which the Szeg\"o projection operators are unbounded on $L^p$ spaces for all $p\not= 2$. The construction follows the ideas in \cite{Zeytuncu13} with modifications from Bergman kernels to Szeg\"o kernels. Two main ingredients are the Forelli-Rudin inflation principle for Szeg\"o kernels \cite{Ligocka89, Englis} and the $L^p$ regularity of weighted Bergman projections on planar domains.

We work with the Euclidean surface measure $d\sigma_E$ on $b\Omega$ (the topological boundary\footnote{Some authors work on the distinguished boundary of the domain, instead of the
topological boundary.}) and hence with the Szeg\"o projection operator associated to $d\sigma_E$ . It is possible to put different measures on $b\Omega$ (or on the distinguished boundary), similar to Fefferman measure (see \cite{Barrett06}) or Leray measure (see \cite{LanzaniStein12, LanzaniStein13}), and investigate the corresponding function theory. However, the domains constructed in this note are only piecewise smooth and contain weakly pseudoconvex points. Therefore, it is not clear how to define such measures that transform well under biholomorphic maps. We postpone the discussion of different measures and investigation of associated Hardy spaces and Szeg\"o projections to a future paper.

\section{Statement and Proof} 

Let $\mathbb{D}$ be the unit disc in $\mathbb{C}$ and
\begin{displaymath}
\phi(z) = \left( 1 - \abs{z}^2 \right)^A \exp\left( \frac{-B}{\left( 1 - \abs{z}^2 \right)^{\alpha}} \right)
\end{displaymath}
for some $A\geq 0$, $B > 0$, and $\alpha > 0$. By using $\phi$, we define the following complete Reinhardt domain in $\mathbb{C}^2$,
\begin{displaymath}
\Omega = \left\{ (z_1, z_2) : z_1 \in \disk, |z_2| < \phi(z_1) \right\} .
\end{displaymath}
Note that for our choices of $A,B$ and $\alpha$ the function $-\log\phi(z)$ is subharmonic on $\disk$ and consequently the corresponding domain $\Omega$ is pseudoconvex, see \cite[page 129]{VlaBook}.

Note also that a holomorphic function $f$ on $\Omega$ is in $H^p(b\Omega)$ if 
\begin{eqnarray} 
\underset{U \subset \subset \disk}{\sup} ~~ \underset{\epsilon < \underset{U}{\inf} \phi}{\sup} \underset{z_1 \in U, |z_2|= \phi(z_1) - \epsilon}{\int} \abs{f}^p d\sigma_E < \infty. 
\end{eqnarray}

As indicated in \cite{Ligocka89, Englis, Knopf}, such functions have non-tangential
boundary values almost everywhere on $b\Omega$ and
\begin{eqnarray}
\norm{f}^p_{H^p} = \int_{b\Omega} \abs{f}^p d\sigma_E
\end{eqnarray}
where $d\sigma_E$ is explicitly written down by parametrizing the three real dimensional boundary and computing the Jacobian determinant (see \cite{Ligocka89, Englis})

\begin{displaymath}
\int_{b\Omega} \Phi ~d\sigma_E = \int_{\disk} \int_0^{2\pi} \Phi\left(z_1, e^{i\theta}\phi(z_1)  \right) \phi(z_1)  \sqrt{1 + \abs{\nabla \phi(z_1)}^2~} ~d\theta ~ dA(z_1).
\end{displaymath}

The generalized Forelli-Rudin construction indicates the following representation  (see \cite{Ligocka89, Englis}),
\begin{eqnarray} \label{SzegoDecomposition}
\mathbb{S}_{\Omega} [(z_1 , z_2 ), (t_1 , t_2 )] = \sum_{j = 0}^{\infty} z_2^j B_j (z_1 , t_1 )\overline{t^j_2}
\end{eqnarray}
where each $B_j(z_1,t_1)$ is the weighted Bergman kernel of the weight Bergman space
$$A^2 \left(\mathbb{D}, c_j \phi^{2j + 1}  \sqrt{1 + \abs{\nabla \phi}^2~} \right)$$
 (and $\mathbf{B}_j$ labels the corresponding weighted Bergman projection) for some
constant $c_j$. For the rest of the note, we set
\begin{displaymath}
\mu_j :=c_j \phi^{2j+1} \sqrt{1+ \abs{\nabla \phi}^2}.
\end{displaymath}

\begin{thm}
Let $\Omega$ be as above. The Szeg\"o projection operator ${\bf{S}}_{\Omega}$, associated to the Euclidean surface measure, is bounded on $L^p(b\Omega)$ if and only if $p=2$.
\end{thm}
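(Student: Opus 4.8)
The plan is to exploit the diagonal decomposition \eqref{SzegoDecomposition} of the Szeg\"o kernel to transfer the $L^p$-boundedness problem for $\mathbf{S}_{\Omega}$ to a family of weighted Bergman projections $\mathbf{B}_j$ on the disc, and then to show that the weights $\mu_j$ degenerate badly enough that these one-dimensional projections fail to be uniformly bounded on $L^p$ for any $p\neq 2$. First I would set up the Forelli--Rudin correspondence quantitatively: using the explicit surface measure formula and the orthogonal decomposition $L^p(b\Omega)=\bigoplus_j$ (Fourier modes in $\theta$), one checks that $\mathbf{S}_{\Omega}$ is bounded on $L^p(b\Omega)$ only if each $\mathbf{B}_j$ is bounded on $L^p(\mathbb{D},\mu_j)$, and in fact with a uniform bound in $j$ (the Forelli--Rudin ``inflation'' is an isometric embedding up to the constants $c_j$, so norms are comparable). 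This is the step that turns a genuinely three-real-dimensional operator into a manageable planar family.

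Next I would analyze the weights $\mu_j=c_j\phi^{2j+1}\sqrt{1+|\nabla\phi|^2}$. The factor $\sqrt{1+|\nabla\phi|^2}$ is harmless (bounded above and below on compacta, with controlled blow-up near $b\mathbb{D}$), so the essential weight is $\phi^{2j+1}$, i.e.\ a power of $(1-|z|^2)^A\exp(-B(1-|z|^2)^{-\alpha})$. The key point is that this weight has an \emph{exponentially fast} vanishing at the boundary circle, much faster than any power of $1-|z|^2$. For such rapidly decaying weights on the disc it is known (this is precisely the ingredient quoted in the introduction as ``the $L^p$ regularity of weighted Bergman projections on planar domains'', following \cite{Zeytuncu13}) that the weighted Bergman projection $\mathbf{B}_j$ is \emph{not} bounded on $L^p(\mathbb{D},\mu_j)$ for any $p\neq 2$; moreover one gets lower bounds on the operator norms that do not improve as $j$ grows. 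I would make this precise by either (a) citing/adapting the planar result directly to each fixed $\mu_j$, since each $\mu_j$ individually has the offending exponential decay, or (b) producing an explicit test function — a suitable power of a point evaluation kernel, or a bump concentrating near the boundary — on which $\|\mathbf{B}_j f\|_{L^p(\mu_j)}/\|f\|_{L^p(\mu_j)}$ is infinite (or unbounded in $j$). Combining with the previous step, $\mathbf{S}_{\Omega}$ cannot be bounded on $L^p(b\Omega)$ for $p\neq 2$.

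Finally, the case $p=2$ is immediate: $\mathbf{S}_{\Omega}$ is by definition the orthogonal projection of $L^2(b\Omega,d\sigma_E)$ onto $H^2(b\Omega)$, hence has norm $1$. This gives the ``if'' direction and completes the equivalence.

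I expect the main obstacle to be the first step: justifying rigorously that $L^p$-boundedness of $\mathbf{S}_{\Omega}$ forces \emph{uniform-in-$j$} $L^p(\mu_j)$-boundedness of the slices $\mathbf{B}_j$. For $p=2$ this is just Parseval in the $z_2$-variable, but for $p\neq 2$ one must argue more carefully — testing $\mathbf{S}_{\Omega}$ against functions of the form $f(z_1)e^{ik\theta}$ (equivalently, $f(z_1)z_2^k/|z_2|^k$ on the boundary), computing how the $\theta$-integral in the surface-measure formula decouples, and keeping track of the constants $c_j$. A secondary technical point is controlling the $\sqrt{1+|\nabla\phi|^2}$ factor near $b\mathbb{D}$ so that it does not accidentally cancel the exponential decay of $\phi^{2j+1}$; a direct computation of $|\nabla\phi|$ shows it blows up only like a fixed power of $(1-|z|^2)^{-1}$ times $\phi$, so $\mu_j$ still decays exponentially, and the planar irregularity result applies.
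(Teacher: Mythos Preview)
Your approach is essentially the paper's: reduce via the Forelli--Rudin decomposition to weighted Bergman projections on the disc, then invoke the planar $L^p$-irregularity result from \cite{Zeytuncu13} for exponentially decaying radial weights. The one simplification you are missing is that your ``main obstacle'' is illusory: no uniformity in $j$ is needed, because unboundedness of a \emph{single} $\mathbf{B}_j$ already forces unboundedness of $\mathbf{S}_\Omega$. The paper works only with $j=0$, testing $\mathbf{S}_\Omega$ on functions $F(z_1,z_2)=f(z_1)$; the surface-measure computation gives $\|F\|_{L^p(b\Omega)}^p = 2\pi\|f\|_{L^p(\mathbb{D},\mu_0)}^p$ and the $\theta$-integral kills every term in \eqref{SzegoDecomposition} except $j=0$, so $\mathbf{S}_\Omega F = 2\pi\,\mathbf{B}_0 f$. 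It then remains only to verify that $\mu_0$ satisfies the hypotheses of the cited planar theorem (radial, smooth on $[0,1]$, all derivatives vanishing at $r=1$, eventual sign condition on $(-1)^n\mu_0^{(n)}$), which the paper does by an explicit computation for the case $A=0$, $B=\alpha=1$; your observation that $\sqrt{1+|\nabla\phi|^2}$ contributes only polynomial factors against the dominant $e^{-s}$ is exactly the mechanism behind that verification.
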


\begin{proof}
It is clear that the operator is bounded when $p = 2$. We prove that it is unbounded for all $p \neq 2$.
We use the representation \eqref{SzegoDecomposition} to show that if the weighted Bergman projection ${\bf{B}}_0$ is unbounded on $L^{p_0} (\disk, \mu_0)$ for some $p_0 > 2$, then ${\bf{S}}_{\Omega}$ is also unbounded on $L^{p_0} (b\Omega)$.

In particular, let $\displaystyle{f(z_1) \in L^{p_0} (\disk, \mu_0)}$ for some $p_0 >2$ and let $F(z_1,z_2)= f(z_1)$ on $\Omega$. Then,
\begin{align*}
\norm{F}&^{p_0}_{L^{p_0}(b\Omega, d\sigma)}= \int_{b\Omega} \abs{F(z_1, z_2)}^{p_0} ~ d\sigma_E\\
&= \int_{\disk} \int_0^{2\pi} \abs{F(z_1, e^{i\theta} \phi(z_1))}^{p_0} \phi(z_1) \sqrt{1 + \abs{\nabla \phi(z_1)}^2} ~d\theta ~dA(z_1)\\
&= \int_{\disk} \int_0^{2\pi} \abs{f(z_1)}^{p_0} \phi(z_1) \sqrt{1 + \abs{\nabla \phi(z_1)}^2} ~d\theta ~dA(z_1)\\
&= 2 \pi \int_{\disk} \abs{f(z_1)}^{p_0} \phi(z_1) \sqrt{1 + \abs{\nabla \phi(z_1)}^2} ~dA(z_1)\\
&= 2 \pi \norm{f}^{p_0}_{L^{p_0}(\disk, \mu_0)}
\end{align*}

On the other hand,
\begin{align*}
{\bf{S}}_{\Omega}F (z_1, z_2) &= \int_{b\Omega} F(t_1, t_2) \left( \sum_{j = 0}^{\infty} z^j_2 B_j(z_1, t_1) \overline{t^j_2} \right) ~ d\sigma_E(t_1, t_2)\\
&= \int_{\disk} \int_0^{2\pi} F(t_1, t_2)  \left( \sum_{j = 0}^{\infty} z^j_2 B_j(z_1, t_1) \overline{t^j_2} \right) ~d\theta  \mu_0dA(t_1)\\
&= 2\pi \int_{\disk} f(t_1) B_0(z_1, t_1) \mu_0 dA(t_1)\\
&= 2\pi {\bf{B}}_0 f(z_1).
\end{align*}
Therefore, it remains to show that the weighted Bergman projection operator ${\bf{B}}_0$ is unbounded on $L^{p_0} (\disk, \mu_0)$. For this purpose, we invoke the following theorem \cite[Theorem 1.2]{Zeytuncu13} (see also \cite{Dostanic04}) for the weighted Bergman projection ${\bf{B}}_0$.

\begin{thm}\label{DZ}
If $\lambda$ is a radial weight on $\disk$ which satisfies \footnote{Here, we abuse the notation and consider $\lambda$ as a function on $[0, 1]$ and by $\lambda(z)$ we mean $\lambda(|z|)$.}
\begin{enumerate}
\item $\lambda(r)$ is a smooth function on $[0, 1]$,
\item $\frac{d^n}{dr^n}\lambda(1) = 0$ for all $n \in \mathbb{N}$, 
\item for all $n \in \mathbb{N}$ there exists $a_n \in (0, 1)$ such that $(-1)^n\frac{d^n}{dr^n}\lambda(r)$ is non-negative on the interval $(a_n, 1)$.
\end{enumerate}
Then the weighted Bergman projection $\mathbf{B}_{\lambda}$ is bounded from $L^p(\disk, \lambda)$ to $L^p(\disk, \lambda)$ if and only if $p = 2$.
\end{thm}

Recall that
\begin{displaymath}
\mu_0 = c_0\phi \sqrt{1 + \abs{\nabla \phi}^2}.
\end{displaymath}
We have to check that
$\mu_0$ is a radially symmetric function and, as a function of
$r=|z|$, it is smooth on $[0,1]$.  Moreover, $\frac{d^n}{dr^n} \mu_0(1) = 0$ for all $n \in \mathbb{N}$ 
and $(-1)^n \frac{d^n}{dr^n} \mu_0(r)$ is non-negative on $(a_n,1)$ for some $0 < a_n < 1$ for all $n \in \mathbb{N}$. 

We go over this verification for the particular case $A=0, B=1$ and $\alpha=1$; i.e. we take $\phi(z)=\exp\left(\frac{-1}{1-|z|^2}\right)$. Then a quick calculation gives (we drop $c_0$),

\begin{align*}
\mu_0(z)=\exp\left(\frac{-1}{1-|z|^2}\right)\sqrt{1+\exp\left(\frac{-2}{1-|z|^2}\right)\frac{2|z|^2}{(1-|z|^2)^4}}.
\end{align*}

It is clear that $\mu_0$ is indeed a radial function and, as a function of $r=|z|,$ it is smooth on $[0, 1].$ Furthermore we have 
$$\mu_0 (r) = \nu \left(\frac {1}{1-r^2}\right)$$ 
with 
$$\nu (s) := e^{-s}\sqrt {1-2e^{-2s}(s^3 - s^4)}.$$
It then suffices to show that $\lim \limits_{s\rightarrow \infty} \frac {d^n}{ds^n} \nu (s) = 0$ for all $n\in \mathbb N$ and $(-1)^n \frac {d^n}{ds^n} \nu (s)$ is non-negative on $(s_n, \infty)$ for some $s_n > 0$ for all $n\in \mathbb N.$ 
This easily follows from the following expression of $\frac {d^n}{ds^n} \nu (s)$,
\begin{displaymath}
\frac {d^n}{ds^n} \nu (s) = e^{-s}\frac {(-1)^n + \sum_{k=1}^n e^{-ks}P_k (s)}{(1-2e^{-2s}(s^3 - s^4))^{n-\frac 12}},
\end{displaymath}
where $P_k$ is a polynomial. This expression is proved by an induction argument. For the general case of $A,B$ and $\alpha$ we get a similar pattern with an exponential term times some \textit{lower order} terms.

This means $\mu_0$ is a radial weight that decays exponentially on the boundary that satisfies all the other conditions in Theorem \ref{DZ}. We conclude that ${\bf{B}}_0$ is unbounded on $L^p(\disk, \mu_0)$ for all $p \in (1, \infty)$, except $p = 2$. Therefore, the Szeg\"o projection operator $\mathbf{S}_{\Omega}$ is also unbounded on $L^p(b\Omega)$ for all $p \in (1, \infty)$, except $p = 2$.
\end{proof}

\section*{Acknowledgment} 

We thank the referee for helpful comments on the expression of the surface area measure and also for the useful editorial remarks on the exposition of the article.

\end{document}